\tikzstyle arrowstyle=[scale=1]
\def\@settitle{\begin{center}%
		\bfseries\Large
		\@title
	\end{center}%
}
\patchcmd{\@setauthors}{\MakeUppercase}{\normalsize}{}{}
\theoremstyle{plain}
\newtheorem{theorem}{Theorem}[section]		
\newtheorem{lemma}[theorem]{Lemma}
\newtheorem{claim}[theorem]{Claim}
\newtheorem{proposition}[theorem]{Proposition}
\newtheorem{problem}[theorem]{Problem}
\theoremstyle{remark}
\newcommand{\beq}[1]{\begin{equation}\label{#1}}
\newcommand{\enq}[0]{\end{equation}}
\def\Prob{\mathbb{P}}
\def\EE{\mathbb{E}}
\def\NN{\mathbb{N}}
\def\CC{\mathscr{C}}
\def\DD{\mathfrak{D}}
\providecommand{\abs}[1]{\left\lvert#1\right\rvert}
\def\imod#1{\allowbreak\mkern10mu({\operator@font mod}\,\,#1)}
\begin{document}

\title{Colouring random subgraphs}

\author{Boris Bukh}
\address{Department of Mathematics, Carnegie Mellon University, Pittsburgh, PA 15213, USA}
\email{bbukh@math.cmu.edu}

\author{Michael Krivelevich}
\address{School of Mathematical Sciences, Tel Aviv University, Tel Aviv 6997801, Israel}
\email{krivelev@tauex.tau.ac.il}

\author{Bhargav Narayanan}
\address{Department of Mathematics, Rutgers University, Piscataway, NJ 08854, USA}
\email{narayanan@math.rutgers.edu}

\date{6 January 2025}
\subjclass[2010]{Primary 05C80; Secondary 05C15}

\begin{abstract}
	We study several basic problems about colouring the $p$-random subgraph $G_p$ of an arbitrary graph $G$, focusing primarily on the chromatic number and colouring number of $G_p$. In particular, we show that there exist infinitely many $k$-regular graphs $G$ for which the colouring number (i.e., degeneracy) of $G_{1/2}$ is at most $k/3 + o(k)$ with high probability, thus disproving the natural prediction that such random graphs must have colouring number at least $k/2 - o(k)$.
\end{abstract}
\maketitle

\section{Introduction}
For a graph $G$ and $p \in (0,1)$, let $G_p$ denote the random subgraph of $G$ obtained by randomly including each edge of $G$ independently  with probability $p$. Here, we shall study some basic questions about properly colouring the vertices of $G_p$ for fixed $p \in (0,1)$. Studying typical coloring properties of a random subgraph of a graph with given parameters is a very natural setup in the wide context of random graphs. We are also motivated partly by the following old question of Erd\H{o}s and Hajnal~\citep{EH1,EH2} that remains frustratingly open: is it true that for every pair $t,g \in \NN$, there exists a $k = k(t,g) \in \NN$ such that any graph with chromatic number at least $k$ contains a subgraph with chromatic number at least $t$ and girth at least $g$? A natural step towards this question of Erd\H{o}s and Hajnal --- motivated by Erd\H{o}s' randomised construction (see~\citep{textbook}) of graphs of large girth and chromatic number --- is to study the colouring-related properties of the random subgraph $G_p$ of an arbitrary graph $G$ of large chromatic number.

Concretely, we shall focus on the two problems that we next describe. First, we study the following `chromatic number problem': for $k \to \infty$, given an arbitrary graph $G$ with chromatic number $\chi(G) = k$, what can we say (asymptotically) about the chromatic number $\chi(G_p)$ of the random graph $G_{p}$? Second, as a more approachable weakening of the chromatic number problem where we restrict our attention to \emph{greedy} colourings, we also study the following `colouring number problem': for $k \to \infty$, given an arbitrary graph $G$ with minimum degree $\delta(G) = k$, what can we say (again, asymptotically) about the colouring number $\CC(G_p)$ of the random graph $G_{p}$? Here and later, the colouring number $\CC(G)$ of a graph $G$ is the minimum integer $k$ such that every subgraph $G'$ of $G$ has a vertex of degree less than $k$. Equivalently, the $t$-core of $G$ is the maximal subgraph of $G$ in which all vertices have degree at least $t$; the coloring number is the largest $k$ such that the $(k-1)$-core is non-empty.

Towards the first of our two primary questions, the main problem --- popularised by the first author, but certainly natural enough to have been independently considered by other researchers --- is the following.
\begin{problem}\label{prob:chrom}
As $k \to \infty$, is it true that for any graph $G$ with $\chi(G) = k$, we have  \[\EE [\chi(G_{1/2})] =\Omega( k/\log k)?\]
\end{problem}

The lower bound of $k / \log k$ in Problem \ref{prob:chrom} is natural, and best possible if true; indeed, for the complete graph $G=K_{k}$, the classical result of Bollob\'as~\citep{BB} pinning down the chromatic number of dense Erd\H{o}s--R\'enyi random graphs asserts that $\chi(G_{1/2}) \sim k/(2\log_2 k)$ with high probability.

Problem~\ref{prob:chrom} strikes us as a rather basic question; however, not much appears to be known, and the state of the art is as follows. First, for any graph $G$ with $\chi(G) = k$, since $G_{1/2}$ and its complement (in $G$) have the same distribution, it follows from a simple product-colouring argument that $\EE [\chi(G_{1/2})] \ge k^{1/2}$; a similar argument (using a random partition into $r$ parts) shows that $\EE [\chi(G_{1/r})] \ge k^{1/r}$ for any $r \in \NN$. The argument in~\citep{AKS} shows that $\chi(G_{1/2}) = \Omega(k / \log n)$ holds with high probability, where $n$ is the number of vertices of $G$. Finally, Mohar and Wu~\citep{Mohar} have settled the fractional analogue of Problem \ref{prob:chrom} in the affirmative. Specifically, it was proven in \citep{Mohar} that if $G$ has fractional chromatic number $k$, then with probability $1-o_k(1)$ the fractional chromatic number of the random subgraph $G_{1/2}$ is at least $k/8\log_2(4k)$.

Our primary contribution towards the chromatic number problem is an extension of the work of Shinkar~\citep{Shinkar} studying `large deviations' of $\chi(G_{1/2})$. Taken together, our results give bounds for the entire lower tail of $\chi(G_{1/2})$; the first and the third bounds in the result below are due to Shinkar, while our contribution here is a proof of the second bound.
\begin{theorem}\label{t:chrom}
	For any graph $G$ with $\chi(G) = k$, we have
	\[
		\Prob\left(\chi(G_{1/2}) \le d\right)\le
		\begin{cases}
			\exp (-\Omega((k^{1/2}-d)^2 / k^{1/2}) & \text{for } k^{1/2}/2 \le d \le k^{1/2},             \\
			\exp (-\Omega(k/d))                    & \text{for } k^{1/3} \le d \le k^{1/2}/2, \text{ and} \\
			\exp (-\Omega(k(k-d^3))/d^3)           & \text{for } d \le k^{1/3}.
		\end{cases}
	\]
\end{theorem}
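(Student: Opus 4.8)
The plan is to establish only the middle bound, since the outer two are due to Shinkar. The one structural fact I will lean on is the subadditivity of the chromatic number over vertex partitions: if $V(H)=V_1\sqcup\dots\sqcup V_r$ then $\chi(H)\le\sum_i\chi(H[V_i])$, as one may colour the blocks with pairwise disjoint palettes. The crude approach of bounding $\Prob(\chi(G_{1/2})\le d)$ by $\sum_{\mathcal P}2^{-e_{\mathcal P}(G)}$ over all partitions $\mathcal P$ of $V(G)$ into $d$ classes --- where the number $e_{\mathcal P}(G)$ of intra-class edges is $\Omega(k^2/d)$ for every such $\mathcal P$ once $\chi(G)=k$ --- yields only a bound of the shape $d^{|V(G)|}\cdot 2^{-\Omega(k^2/d)}$, which is worthless because $|V(G)|$ is unbounded for fixed $k$ (e.g.\ $G$ of large girth and chromatic number $k$). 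So the real task is to extract from $G$ a low-complexity certificate of high chromatic number that survives passage to the random subgraph.

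The extraction I would use is a greedy peeling. Repeatedly delete from $G$ a vertex set $W$ that is \emph{minimal} subject to $\chi(G[W])\ge 4d^2$; minimality forces $\chi(G[W])=4d^2$ exactly, since deleting one vertex changes $\chi$ by at most one. Stop once the leftover graph has chromatic number at most $4d^2-1$. If the blocks so produced are $W_1,\dots,W_m$ with remainder $R$, subadditivity gives $k=\chi(G)\le 4d^2 m+(4d^2-1)$, hence $m\ge k/(4d^2)-1$, which is $\Omega(k/d^2)$ throughout the range $d\le k^{1/2}/2$ (once $d\le k^{1/2}/(2\sqrt2)$, say; the narrow remaining sliver is handled directly by reading off Shinkar's first bound at $d'=k^{1/2}/2$ and using monotonicity of the event in $d$). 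The blocks are pairwise vertex-disjoint, so $G_{1/2}[W_1],\dots,G_{1/2}[W_m]$ depend on disjoint edge sets and are mutually independent; and each is an induced subgraph of $G_{1/2}$, so $\chi(G_{1/2})\le d$ forces $\chi(G_{1/2}[W_i])\le d$ for all $i$ simultaneously. Therefore
\[
\Prob\big(\chi(G_{1/2})\le d\big)\ \le\ \prod_{i=1}^{m}\Prob\big(\chi(G_{1/2}[W_i])\le d\big).
\]
Since $\chi(G[W_i])=4d^2$, the value $d$ sits exactly at the lower endpoint $(4d^2)^{1/2}/2$ of the interval on which Shinkar's first bound is valid: applying the first bound of \cref{t:chrom} to $G[W_i]$, with $k$ there replaced by $4d^2$, gives $\Prob(\chi(G_{1/2}[W_i])\le d)\le\exp(-\Omega(d))$. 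Multiplying the $m=\Omega(k/d^2)$ factors yields $\Prob(\chi(G_{1/2})\le d)\le\exp(-\Omega(k/d))$, as wanted.

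The step I expect to be the main obstacle is the packing itself, together with the book-keeping that makes it close up: one needs vertex-disjoint induced subgraphs whose chromatic numbers are simultaneously \emph{large} enough --- of order $d^2$ --- for Shinkar's $\sqrt{\cdot}$-type tail bound to be non-trivial on each, and \emph{small} enough --- still $O(d^2)$ --- that $\Omega(k/d^2)$ of them fit disjointly inside a graph of chromatic number $k$; the value $4d^2$ is precisely where these competing requirements meet, and this is what pins the exponent at $k/d$. Everything downstream --- independence from vertex-disjointness, the inclusion $\{\chi(G_{1/2})\le d\}\subseteq\bigcap_i\{\chi(G_{1/2}[W_i])\le d\}$, and invoking Shinkar's first bound as a black box at the endpoint of its valid range --- is then routine. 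Finally, the hypothesis $d\ge k^{1/3}$ in the statement is not needed for this argument: it is there only because below that threshold the third bound of \cref{t:chrom} is already the stronger estimate.
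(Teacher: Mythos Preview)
Your argument is correct and follows the same overall strategy as the paper: decompose $G$ into $\Theta(k/d^2)$ vertex-disjoint induced subgraphs each of chromatic number $\Theta(d^2)$, use independence across the pieces, and apply Shinkar's first bound to each piece to get a factor $\exp(-\Omega(d))$, yielding $\exp(-\Omega(k/d))$ upon multiplication.

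The one difference is in how the pieces are produced. Your greedy peeling (repeatedly extracting a minimal subset $W$ with $\chi(G[W])\ge 4d^2$, then invoking subadditivity to count the pieces) is correct but slightly heavier than what the paper does: the paper simply fixes an optimal $k$-colouring $V=V_1\cup\dots\cup V_k$, equipartitions the colour index set $[k]$ into $s=\Theta(k/d^2)$ blocks $I_1,\dots,I_s$ of size at least $2d^2$, and takes $G_j=G[\bigcup_{i\in I_j}V_i]$. This gives $\chi(G_j)=|I_j|\ge 2d^2$ directly, with no minimality or subadditivity bookkeeping, and avoids the sliver near $d=k^{1/2}/2$ that you had to treat separately. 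Your route has the mild advantage of not needing to name an explicit optimal colouring, but the paper's construction is the cleaner one here.
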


Towards the second of our two primary questions, we raise the following problem.

\begin{problem}\label{p:degen}
As $k \to \infty$, determine the largest $\DD(k)$ for which we have \[\Prob\left(\CC(G_{1/2}) \ge \DD(k)\right) \ge 1/2\]
for all graphs $G$ with $\delta(G) \ge k$.
\end{problem}
In other words, Problem~\ref{p:degen} asks the following: as $k \to \infty$, what is the best possible lower bound on (the probable value of) the colouring number $\CC(G_{1/2})$ that holds for all graphs $G$ with minimum degree $\delta(G)\ge k$? Problem~\ref{p:degen} is the `degree-analogue' of Problem~\ref{prob:chrom}, replacing proper colourings with (the more tractable) proper \emph{greedy} colourings, and the chromatic number with the minimum degree. This is motivated in large part by the degree-analogue --- due to Thomassen~\citep{Thomassen} and also wide open --- of the aforementioned problem of Erd\H{o}s--Hajnal~\citep{EH1,EH2}: is it true that for every pair $t,g \in \NN$, there exists a $k = k(t,g) \in \NN$ such that any graph with average degree at least $k$ contains a subgraph with average degree at least $t$ and girth at least $g$?

Let us point out that Problem~\ref{p:degen} and its variants arise naturally in some other (non-mathematical) contexts as well. First, several variants of Problem~\ref{p:degen} have been studied by biologists, sociologists and theoretical computer scientists as models of `cascading failures' in networks; see~\citep{Blume,Watts} and the references therein, for example. Second, we note that Problem~\ref{p:degen} can also be recast in the language of bootstrap percolation~\citep{CLR}. Bootstrap percolation on a graph $G$ is a model --- originating in statistical physics --- for the spread of infection on $G$ defined as follows: starting with an initially infected set of vertices $A$, infection spreads along the edges of $G$, where a vertex of $G$ gets infected if the number of its (previously) infected neighbours in $G$ exceeds a specified threshold, and $A$ is said to percolate if all the vertices of $G$ are eventually infected. There is by now a large body of (mathematical) work devoted to understanding the percolating sets for various graph families (see~\citep{BP, JRTV}, for example), and Problem~\ref{p:degen} may also be rephrased in this language: given a graph $G$ with $\delta(G)= k$, we are looking to understand for what $t = t(k)$ we can guarantee that the set $A$ of vertices of degree at most $t$ in $G_{1/2}$ percolates in bootstrap percolation on $G_{1/2}$ with the threshold $(\deg(v, G_{1/2}) - t)$ at each vertex $v$ of $G_{1/2}$ (or in other words, for what $t=t(k)$ we can guarantee that the $t$-core of $G_{1/2}$ is non-empty).

It is clear from considering the complete graph $G = K_{k+1}$ that we have the upper bound $\DD(k) \le k/2 + o(k)$. On the other hand, if an $n$-vertex graph $G$ satisfies $\delta(G)=k$, then $G$ has
$\geq kn/2$ edges, and so Chernoff bound \cite[Theorem~A.1.1]{textbook} implies that $\Pr[e(G_{1/2})<kn/4-\sqrt{kn/2}]<e^{-2}<1/2$; using this
and the well-known fact that any graph of average degree $d$ contains a subgraph of minimum degree at least $d/2$, it follows that $\DD(k) \ge k/4 - o(k)$.

While the upper bound of $\DD(k) \le k/2 + o(k)$ seems like the natural guess for the truth, the following result --- our main contribution towards the colouring number problem, and our most significant result here --- shows that this is not the case.

\begin{theorem}\label{t:degen-main}
	As $k \to \infty$, we have $\DD(k) \le k/3 + o(k)$.
\end{theorem}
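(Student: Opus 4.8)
The plan is to construct, for all large $k$, a graph $G = G_k$ with $\delta(G_k) \ge k$ such that $\Prob\big[\CC((G_k)_{1/2}) \le k/3 + o(k)\big] \to 1$. Since $\CC(G') \le d+1$ precisely when the $(d+1)$-core of $G'$ is empty, this is the same as showing that with high probability the greedy peeling procedure --- repeatedly deleting a vertex whose current degree is less than $k/3 + o(k)$ --- exhausts all of $(G_k)_{1/2}$; in the language of the bootstrap percolation reformulation above, the set of atypically sparse vertices must percolate.

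The graph $G_k$ will be assembled from small dense ``gadgets'' wired together in a network so that $G_k$ is (close to) $k$-regular, and so that there is a prescribed order in which to dismantle the gadgets with the property that, by the moment any given gadget is reached, only about a third of each of its vertices' $k$ neighbours survive. When that holds, such a vertex's residual degree in $(G_k)_{1/2}$ is a binomial random variable of mean about $k/6$, which lies below the threshold $k/3$ with slack $\Omega(k)$; hence it fails to be deletable only with probability $\exp(-\Omega(k))$, and a union bound over the $O(|V(G_k)|)$ vertices shows that whp the entire peeling runs to completion. Concretely, I would (i) define the gadget and the wiring network and verify the degree condition; (ii) exhibit the dismantling order and check that every gadget really does see at most a $(1/3+o(1))$-fraction of its neighbourhood surviving up to the moment it is dismantled; and (iii) carry out the union bound, choosing the parameters so that the per-vertex failure exponent dominates $\log |V(G_k)|$.

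I expect step (ii) to be the crux, and it is really two linked difficulties. First, since $G_k$ is essentially regular, every vertex starts with degree concentrated near $k/2 > k/3$, so the peeling cannot begin at any vertex's ``natural'' degree: the dismantling must be seeded, either by a sacrificial piece of the construction or by a rare-but-typically-present fluctuation that makes one region of $(G_k)_{1/2}$ atypically sparse --- which forces $|V(G_k)|$ to be reasonably large. Second, and in tension with this, once $|V(G_k)|$ is large there are also atypically \emph{dense} regions, and the wiring must be designed so that such a region cannot permanently stall the peeling: the essential points are that the dismantling should be able to approach any gadget from several directions (so a gadget that resists removal ``from one side'' is still cleared) and that no single stubborn vertex ever blocks progress (its deletion can be deferred until the rest of its neighbourhood has been peeled away), so that a genuine obstruction requires a whole dense patch rather than one bad vertex. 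Arranging that the probability of an unremovable obstruction is genuinely smaller than the probability of a usable seed region --- so that there is a regime of $|V(G_k)|$ in which the construction succeeds --- is the heart of the matter, and the factor $1/3$ (rather than the trivially available $1/2$) is exactly what an appropriately connected network of gadgets buys, each gadget being dismantled only once roughly two thirds of its neighbourhood is gone.
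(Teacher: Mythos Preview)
Your high-level strategy---assemble $G$ as a network of gadgets, seed the peeling via a rare fluctuation, let ``death'' propagate through the network, and rule out obstructions by a union bound that exploits the network's expansion---is exactly what the paper does. Concretely, the paper takes the wiring graph $H$ to be a $3$-regular expander of logarithmic girth and $G$ its $(k/3)$-blow-up; the seed is a super-vertex $r$ all of whose incident edges vanish in a first sprinkling round (which exists whp once $n$ is large enough), and the obstruction analysis is a union bound over candidate dead sets $T_0\ni r$, using $|\partial T_0|\ge c|T_0|$ from expansion together with a per-super-vertex survival probability of $e^{-\Omega(k)}$; the large-girth assumption handles the regime where $|T_0|$ is close to $n$.

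Where your plan goes wrong is the arithmetic behind the constant $1/3$, and with your numbers step~(ii) would not go through. You ask that each gadget be dismantled only once roughly two thirds of its neighbourhood is gone, so that $k/3$ live neighbours remain and the mean residual degree in $G_{1/2}$ is $k/6$. That is far more than needed: at threshold $t=k/3+\alpha k$ it suffices that just over \emph{one} third of the neighbourhood is gone, since with $2k/3$ live neighbours one already has $\Prob\bigl(\operatorname{Binom}(2k/3,\tfrac12)\ge k/3+\alpha k\bigr)=e^{-\Omega(k)}$. This matters because requiring only one dead neighbour (out of three in $H$) turns propagation into $1$-neighbour bootstrap---mere connectivity---so a single seed reaches everything unless blocked. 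Under your ``two thirds gone'' rule, propagation on a $3$-regular $H$ is $2$-neighbour bootstrap, which does not spread at all from a single seed on a locally tree-like graph, so your step~(ii) as stated cannot be carried out. The factor $1/3=(d-1)/(2d)$ at $d=3$ really comes from $d=3$ being the smallest degree at which $H$ can be a genuine expander, which is precisely what the obstruction side of the competition you correctly identify demands.
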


In more detail, the proof of Theorem \ref{t:degen-main} shows (for all $k \in \NN$ divisible by 3) that there exist large $k$-regular graphs $G$ for which the $t$-core of $G_{1/2}$ is empty with high probability for some $t = k/3 + o(k)$. Our next result, stated below, serves to illustrate some of the subtleties that arise in studying Problem~\ref{p:degen}.

\begin{theorem}\label{t:degen-sec}
	For every $\alpha>0$, there exists a $\beta>0$ such that for infinitely many $k\in\NN$, there exist arbitrarily large $k$-regular graphs $G$ for which the following holds with high probability (as $k \to \infty$): any induced subgraph $H$ of $G_{1/2}$ with $\delta(H) \ge k/4+\alpha k$ satisfies $|V(H)|/|V(G)| = O((1-\beta)^{k^2})$.
\end{theorem}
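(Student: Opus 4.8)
The plan is to take $G$ to be a disjoint union of many copies of a single carefully designed $k$-regular gadget $F = F(k,\alpha)$, reducing the statement to a bound on the $d$-core of $F_{1/2}$, where $d := \lceil k/4 + \alpha k\rceil$. Recall that the $d$-core of a graph is its maximal induced subgraph of minimum degree at least $d$, equivalently the set of vertices surviving the process that repeatedly deletes a vertex of current degree less than $d$; in particular any induced $H$ with $\delta(H) \ge d$ has $V(H)$ contained in the $d$-core. If $G$ is the disjoint union of copies $F^{(1)}, F^{(2)}, \dots$ of $F$, then the $d$-core of $G_{1/2}$ is the disjoint union of the $d$-cores of the independent graphs $F^{(i)}_{1/2}$, so it suffices to construct, for infinitely many $k$, a $k$-regular $F$ — which we are free to take as large as we like — with
\[
	\Prob\bigl(\text{the $d$-core of $F_{1/2}$ is nonempty}\bigr) \le (1-\beta)^{k^2}
\]
for a suitable $\beta = \beta(\alpha) > 0$. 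Granting this, if $G$ is the union of $M$ copies then the number of copies with nonempty $d$-core is $\Bin(M,p)$-distributed with $p \le (1-\beta)^{k^2}$, hence at most $2(1-\beta)^{k^2}M$ with probability tending to $1$ as $M \to \infty$ by a Chernoff bound; on that event every induced subgraph $H$ of $G_{1/2}$ with $\delta(H) \ge d$ satisfies $|V(H)| \le |d\text{-core of }G_{1/2}| \le 2(1-\beta)^{k^2}M\,|V(F)| = O\bigl((1-\beta)^{k^2}\bigr)\,|V(G)|$, and $G$ may be taken arbitrarily large by choosing $M$ large.

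The construction and analysis of $F$ is the real work. The natural strategy, in the spirit of the construction behind \cref{t:degen-main}, is to arrange that the deletion process on $F_{1/2}$ is a reliably nucleating cascade. One builds $F$ from a chain (or cycle) of ``blocks'' of about $k/2$ vertices, each vertex sending roughly $k/2$ of its $k$ edges to each of the two neighbouring blocks; then, once any one block has been deleted in its entirety, every vertex of an adjacent block retains only its $\approx k/2$ edges to the block on its other side, of which only $\Bin(k/2,1/2) \approx k/4 < d$ survive in $F_{1/2}$, so that adjacent block is deleted as well, and the cascade propagates outward in both directions and sweeps away all of $F$ (the small residue left behind at each step is itself removed in a later pass, since so little survives that each residual vertex ends up with far fewer than $d$ surviving neighbours). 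The cascade is triggered by the event that some one block is deleted spontaneously at the outset, i.e.\ that each of its $\approx k/2$ vertices independently has fewer than $d$ incident edges present in $F_{1/2}$; since every vertex of $F$ has degree exactly $k$ and $d < k/2$, a fixed vertex fails to reach degree $d$ only with probability $e^{-\Theta_\alpha(k)}$, so an entire block fails together only with probability $e^{-\Theta_\alpha(k^2)}$ — and if $F$ has $\gg e^{\Theta_\alpha(k^2)}$ roughly independent blocks, such a trigger appears except with probability at most $(1-\beta)^{k^2}$, which is exactly the bound needed.

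The crux — and what I expect to be the main obstacle — is the quantitative tension inherent in this scheme. On the one hand $F$ must contain on the order of $e^{\Theta_\alpha(k^2)}$ blocks for a spontaneous trigger to be this likely; on the other hand, once a trigger occurs, the cascade must sweep successfully across \emph{all} of these blocks, so a union bound over roughly $e^{\Theta_\alpha(k^2)}$ blocks must control the probability that the cascade stalls or leaves too large a residue at some block. Since no regular graph admits a strictly deterministic cascade — double-counting the edges between consecutive blocks forces some probabilistic slack at every step — this per-block failure probability is genuinely positive, and making it small enough to survive so large a union bound, while keeping the blocks small enough that the trigger remains only moderately rare, is precisely the delicate balance the construction must achieve; it is what dictates the exact block size, the exact wiring, and the admissible $\beta(\alpha)$, and it is plausible that a cleverer gadget than the plain ``cyclic chain of blocks'' is required to close the gap. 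Once a gadget $F$ with the displayed property is in hand, the remaining steps — controlling the residue, assembling the whp statement for $F_{1/2}$, and the disjoint-copies concentration argument — are routine.
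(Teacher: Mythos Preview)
Your reduction via disjoint unions is clean, but it forces you to prove something strictly stronger than \cref{t:degen-sec}: you need a single $k$-regular gadget $F$ for which the $d$-core of $F_{1/2}$ is \emph{empty} with probability at least $1-(1-\beta)^{k^2}$. In particular, such an $F$ would witness $\Prob(\CC(F_{1/2})\ge d+1)<1/2$ for $d=k/4+\alpha k$, and hence $\DD(k)\le k/4+\alpha k$ for every $\alpha>0$. That is precisely \cref{p:degen}, which the paper leaves open (and which would supersede \cref{t:degen-main}). So the ``quantitative tension'' you flag is not a technicality to be balanced by a cleverer gadget: it is the whole problem, and your outline does not resolve it. Concretely, in the chain-of-blocks construction the trigger forces $N\gtrsim e^{\Theta_\alpha(k^2)}$ blocks, while the event ``some vertex of block $j$ has at least $d$ surviving edges to block $j+1$'' already has probability $e^{-\Theta_\alpha(k)}$ per block; with $N$ this large such events are ubiquitous, and you give no argument that they cannot assemble into a nonempty core somewhere along the chain.

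The paper sidesteps this by \emph{not} asking the core to be empty. It builds one connected $G$ as a blow-up of a $2$-regular \emph{directed} expander $H$: each super-vertex is a path of $s+3$ layers of size $k/2-k/2s$, consecutive layers joined by complete bipartite graphs, and each directed edge of $H$ is realised by sparse $(k/2s)$-regular bipartite graphs from the interior layers of the tail to an end layer of the head. After a two-round deletion giving a dead root $r$, a cascade of ``nearly dead'' super-vertices spreads along out-edges of $H$; the key structural claim is that any super-vertex on the boundary of this cascade must contain a \emph{resilient pair} of adjacent layers, an event requiring $\Theta(k)$ vertices each to lie in a $\Theta(k)$-sized binomial large deviation, hence of probability at most $(1+c)^{-k^2}$. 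These resilience events are independent across super-vertices, so Markov's inequality bounds the number of resilient super-vertices by $n(1+c/2)^{-k^2}$, and the expansion of $H$ then confines the un-cascaded region $\overline{T}$ to density $O((1-\beta)^{k^2})$. Thus the paper proves the density bound directly, without ever needing a gadget whose core is empty with doubly-exponentially small failure probability.
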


In other words, Theorem \ref{t:degen-sec} asserts (for infinitely many $k\in\NN$) that there exist large $k$-regular graphs $G$ for which the $t$-core of $G_{1/2}$ is \emph{just barely} non-empty (i.e., is  very small relative to $G$) for any $t = k/4 + o(k)$. In the light of this, it seems clear to us that improving on the easy lower bound of $\DD(k) \ge k/4 + o(k)$ sketched above is likely to require some interesting ideas.

This paper is organised as follows. After covering some preliminaries in Section \ref{s:prelim}, we give the proof of Theorem \ref{t:chrom} in Section \ref{s:chrom} and the proofs of Theorem \ref{t:degen-main} and Theorem \ref{t:degen-sec} in Section \ref{s:col}. Finally, we conclude in Section \ref{s:conc} with a discussion of open problems and directions for further work.

\section{Preliminaries}\label{s:prelim}
We start by establishing some notation and collecting together some tools that we will rely on in the sequel.

Our graph theoretic notation is for the most part standard; we refer the reader to~\citep{BBbook} for terms not defined here. That said, we remind the reader of a few standard notions that come up frequently in this paper.

First, recall that the \emph{chromatic number $\chi(G)$} of a graph $G$ is the smallest number of colours needed to properly colour the vertices of $G$, i.e., to colour the vertices in such a way that no two adjacent vertices share the same colour.

Next, following Erd\H{o}s and Hajnal~\citep{EH3}, the \emph{colouring number $\CC(G)$} of a graph $G$ is the least number $c$ for which there exists an ordering of the vertices of $G$ in which each vertex has fewer than $c$ neighbours preceding it in the ordering; this parameter --- also (essentially) called the degeneracy or the core number --- is the number of colours used by the natural greedy algorithm for properly colouring the vertices of $G$.

We shall also need two notions of graph boundaries: for a subset $S\subset V(G)$ of the vertices of a graph $G$, its \emph{vertex boundary $\partial S$} consists of those vertices of $G$ not in $S$ that are adjacent to at least one vertex in $S$, and its \emph{edge boundary $\nabla S$} consists of those edges of $G$ with one end in $S$ and the other in $\overline S$. Overloading this notation slightly, for a subset $S\subset V(G)$ of the vertices of a directed graph $G$, its vertex boundary $\partial S$ consists of those out-neighbours of $S$ that are not in $S$, and similarly, its edge boundary $\nabla S$ consists of those edges of $G$ directed from $S$ to $\overline S$.

We need a standard bound for the number of connected components in a graph of given maximum degree; it can be found, e.g., in \citep{BFM} (see Lemma 2 there), along with a proof.
\begin{lemma}\label{lem:treelemma}
	For a graph $G$ of maximum degree $\Delta$, the number of connected, $t$-edge subgraphs of $G$ containing a given vertex is less than $(e\Delta)^t$. 
\end{lemma}

Another fairly standard fact we utilize is a quantitative connection between eigenvalues and edge distribution in regular graphs. For a graph $G$ its eigenvalues are those of its adjacency matrix $A(G)$. The following bound is due to Alon and Milman~\citep{AM}.
\begin{lemma}\label{lem:AM}
	Let $G$ be a $d$-regular graph on $n$ vertices with the second largest eigenvalue $\lambda$. Then for every subset $S\subset V$, one has
	\[|\nabla S|\ge \frac{(d-\lambda)|S|(n-|S|)}{n}.\] 
\end{lemma}

\section{Chromatic number}\label{s:chrom}
First, following~\citep{AKS}, we record (in slightly greater generality) a proof of the fact that for any $n$-vertex graph $G$ with $\chi(G) = k$,  we have $\chi(G_{1/2}) = \Omega(k / \log n)$ with high probability (as $n \to \infty$).
\begin{proposition}
	For any $n$-vertex graph $G$ with $\chi(G) = k$ and any $0<p<1$, we have
	\[\chi(G_p)\ge \frac{pk}{2\log n}\] with high probability (as $n\to\infty$).
\end{proposition}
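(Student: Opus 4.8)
The plan is a first-moment argument. We will show that, with high probability, the host graph $G$ has the following property $\mathcal{P}_c$, with the threshold $c=c(n,p)$ taken just above $2\ln n / \ln(1/(1-p))$: every vertex set $S$ that is independent in $G_p$ induces a subgraph $G[S]$ with $\chi(G[S])\le c$. Assuming $\mathcal{P}_c$, the proposition follows immediately: fix an optimal proper colouring of $G_p$ and let $V_1,\dots,V_t$ be its colour classes, where $t=\chi(G_p)$; each $V_i$ is independent in $G_p$, so by $\mathcal{P}_c$ the graph $G[V_i]$ admits a proper colouring $\phi_i$ using at most $c$ colours, and colouring each $v\in V_i$ by the pair $(i,\phi_i(v))$ then properly colours $G$ with at most $ct$ colours. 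Hence $k=\chi(G)\le c\,\chi(G_p)$, i.e.\ $\chi(G_p)\ge k/c$, which by the choice of $c$ will exceed $pk/(2\log n)$ for all large $n$.

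To see that $\mathcal{P}_c$ holds with high probability, note that if $G[S]$ is not $c$-colourable then it is not $(c-1)$-degenerate, so some subgraph of $G[S]$ has minimum degree at least $c$; equivalently there is a set $T\subseteq S$ with $\delta(G[T])\ge c$, and any such $T$ satisfies $|T|\ge c+1$ and $e(G[T])\ge c|T|/2$. Since $T$ is independent in $G_p$, all $e(G[T])$ edges of $G[T]$ are absent from $G_p$, an event of probability $(1-p)^{e(G[T])}\le(1-p)^{c|T|/2}$. A union bound over all such $T$, grouped by size $s=|T|$ and using $\binom ns\le(en/s)^s\le(en/c)^s$ for $s\ge c$, gives
\[
	\Prob\bigl(\mathcal{P}_c\text{ fails}\bigr)\;\le\;\sum_{s\ge c+1}\binom ns\,(1-p)^{cs/2}\;\le\;\sum_{s\ge c+1}\Bigl(\tfrac{en}{c}\,(1-p)^{c/2}\Bigr)^{\!s}.
\]
Taking $c$ to be the least integer larger than $\bigl(2\ln n+2\ln(2e)\bigr)/\ln(1/(1-p))$ makes the bracketed quantity at most $\tfrac12$, so the geometric series is at most $2^{-c}=o(1)$. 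Finally, because $\ln(1/(1-p))>p$, this value of $c$ satisfies $c\le 2\log n/p$ once $n$ is large, and hence $\chi(G_p)\ge k/c\ge pk/(2\log n)$ on the event $\mathcal{P}_c$.

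This is a routine application of the first-moment method, so I do not anticipate a serious obstacle; however, two points merit a little care. First, a vertex-minimal set $T$ with $\delta(G[T])\ge c$ need not be small --- for instance $G$ could be a large $c$-regular vertex-transitive graph --- so the union bound must genuinely run over sets of every size $s\ge c+1$; this costs nothing, since the resulting series is geometric with a comfortable ratio. Second, to recover the precise constant claimed in the proposition one should resist weakening $(1-p)^m$ to $e^{-pm}$ and instead keep it as $e^{-m\ln(1/(1-p))}$, exploiting the strict inequality $\ln(1/(1-p))>p$; this is exactly what converts the natural threshold $c\approx 2\ln n/\ln(1/(1-p))$ into the stated lower bound $pk/(2\log n)$.
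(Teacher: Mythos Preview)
Your proof is correct and follows essentially the same approach as the paper: a first-moment union bound showing that with high probability no subset $T$ of vertices has both $\delta(G[T])\ge c$ and $T$ independent in $G_p$, followed by the product-colouring argument to conclude $\chi(G)\le c\cdot\chi(G_p)$. The only cosmetic difference is that the paper fixes the threshold $c=2\log n/p$ at the outset and uses $(1-p)^{1/p}\le e^{-1}$ inside the sum, whereas you carry $\ln(1/(1-p))$ explicitly and invoke $\ln(1/(1-p))>p$ at the end; the two are equivalent.
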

\begin{proof}
	The probability that there exists a set $V'\subset V(G)$ for which
	\begin{enumerate}[(a)]
		\item the induced subgraph $G[V']$ has minimum degree at least ${2\log n}{/p}$, and
		\item $V'$ becomes an independent set in $G_p$,
	\end{enumerate}
	is at most
	\[
		\sum_{m={2\log n}/{p}}^n \binom{n}{m}(1-p)^{(m \log n)/p}\leq \sum_{m={2\log n}/{p}}^n \left(\frac{en}{m}\cdot \frac{1}{n}\right)^m=o(1).
	\]
	As every $t$-chromatic graph contains a subgraph of minimum degree at least $t-1$ (a color-critical subgraph), the above implies that with high probability, any independent set $V'$ in $G_p$ induces a subgraph of $G_p$ of chromatic number at most $(2\log n)/p$ in~$G$.
	As $G_p$ can be partitioned into $\chi(G_p)$ independent sets (by definition), the result follows.
\end{proof}

We now prove Theorem \ref{t:chrom}.
\begin{proof}[Proof of Theorem \ref{t:chrom}]
	As mentioned, our contribution is the second bound in the statement of the theorem. Let $G=(V,E)$ be a graph with $\chi(G)=k$.  Our goal is to estimate from above the probability $\Prob(\chi(G_{1/2}) \le d)$.

	Fix an optimal coloring $V=V_1\cup\ldots\cup V_k$ of $G$. Equipartition $[k]=I_1\cup\ldots\cup I_s$ with $|I_j|\ge 2d^2$ and $s=\Theta(k/d^2)$. Set $G_j=G[\cup_{i\in I_j}V_i]$ for $1\le j\le s$, and note that $\chi(G_j)=|I_j|\ge 2d^2$. If $\chi(G_{1/2})\le d$, then the chromatic numbers of all the random subgraphs $(G_j)_{1/2}$ are at most $d$. Observe crucially that these events are independent as the graphs $G_j$ do not share any vertices, and thus edges. Hence
	\[
		\Prob(\chi(G_{1/2}) \le d)\le  \prod_{i=1}^s \Prob(\chi((G_i)_{1/2})\le d)\,.
	\]
	Recall that $\chi(G_j)\ge 2d^2$. Hence, as explained in the introduction, $\EE [\chi((G_j)_{1/2})] \ge \sqrt{2}d$. Using the first bound in the statement of the theorem (proved using Doob martingales as outlined by Shinkar~\citep{Shinkar}), we get $\Prob(\chi((G_i)_{1/2})\le d)\le e^{-cd}$ for some absolute constant $c>0$. It follows that
	\[
		\Prob(\chi(G_{1/2}) \le d)\le \left(e^{-cd}\right)^s=\exp(-\Theta(k/d))\,,
	\]
	as required.
\end{proof}

A twist on the above idea also provides a new and fairly simple proof of the third statement of Theorem \ref{t:chrom} for the range $d=O( k^{1/4})$. Here is an outline. For $G$ with an optimal coloring $V=V_1\cup\ldots\cup V_k$, fix a collection of subsets $I_1,\ldots,I_s\subset [k]$ with $|I_j|\ge 2d^2$ and $s=\Theta(k^2/d^4)$ so that $|I_i\cap I_j|\le 1$ for every $1\le i\ne j\le s$; the existence of such a collection is a fairly standard fact in design theory. Let $G_j=G[\cup_{i\in I_j}V_i]$, $1\le j\le s$. The events
$A_j=\{\chi((G_j)_{1/2})\le d\}$ are again independent, and each happens with probability at most $e^{-cd}$. It follows that $\Prob(\chi(G_{1/2}) \le d)\le  (e^{-cd})^s= \exp(-\Theta(k^2/d^3))$.

\section{Colouring number}\label{s:col}
Our proofs of Theorem \ref{t:degen-main} and Theorem \ref{t:degen-sec} rely on the existence of good expander graphs. Here, we make use of specific graphs that happen to be Ramanujan, but any family of sufficiently strong expanders should suffice.

We start with the proof of Theorem \ref{t:degen-main}.

\begin{proof}[Proof of Theorem \ref{t:degen-main}]
	Since $\DD(k)$ is a non-decreasing function of $k$, it suffices to only consider $k$ that are divisible by $3$. Given any small $0 < \alpha < 1/100$ and $k \in \NN$ with $3 \mid k$, we shall construct a $k$-regular graph $G$ (infinitely many, in fact) with the property that, for $t=k/3 + \alpha k$, the $t$-core of $G_{1/2}$ is empty with probability $1-o(1)$ as $k \to \infty$; clearly, this suffices to prove the result.

	We need the following well-known fact: there are positive constants $c',c''$ such that  for infinitely many $n$ there is a 3-regular graph $H$ on $n$ vertices without cycles shorter than $c'\log n$, and with all eigenvalues $\lambda_i$ but the first one $\lambda_1=3$ satisfying $\lambda_i\le 3-c''$. See~\citep{Chiu}, say, for a proof of this fact.

	Choose $H$ as above, and let $G$ be a $(k/3)$-blow-up of $H$, i.e., $G$ is obtained from $H$ by replacing each vertex of $H$ by an independent set of size $k/3$ --- we call these sets (and interchangeably, the vertices of $H$) \emph{super-vertices} ---  and by replacing each edge of $H$ by a complete bipartite graph in $G$ between the corresponding super-vertices.

	In order to help the reader to grasp our argument, let us state that it implements and analyses the following bootstrap percolation-type process on $H$. First, we form a random subset $R$ of \emph{protected edges} of $H$, where an edge $e\in E(H)$ is declared protected independently and with probability $p=\exp\{-\Theta(k)\}$; protected edges correspond to complete bipartite graphs between the super-vertices of $H$ in which in the random subgraph $G_{1/2}$ there is a vertex of degree at least $k/6+\alpha k/2$; clearly the events corresponding to the edges of $H$ becoming protected are independent for different edges of $H$, and happen each with probability exponentially small in $k$. Then a random vertex $r$ of $H$ is chosen; in the argument this will be a super-vertex of $H$ all of whose incident edges get erased in the first round of deletions. Now, consider the following propagation process. We start with $V_0=\{r\}$, and at each step update $V_0$ by adding to it all the vertices of $H$ outside of $V_0$ that have at least two neighbours in $V_0$, or alternatively have at least one neighbor in $V_0$ and are not incident to any protected edge from $R$. We will prove that if $H$ is a good expander with logarithmic girth, then typically, the above propagation process ends with $V_0$ consuming all the vertices of $H$; this corresponds to the random subgraph $G_{1/2}$ having an empty $(k/3+\alpha k)$-core, as desired.

	We say that a vertex of $G$ \emph{survives} or \emph{lives} if it is present in the $t$-core of $G_{1/2}$, and that it \emph{dies} otherwise; similarly, we say that a super-vertex of $H$ \emph{dies} if none of its constituent vertices survive, and that it \emph{lives} or \emph{survives} otherwise.

	We shall, for technical reasons, construct $G_{1/2}$ by deleting the edges of $G$ in two rounds: in the first round, each edge of $G$ is independently sampled with probability $\alpha/3$, in the second round, each edge of $G$ is independently sampled with probability $(1/2 - \alpha/3) / (1- \alpha/3) \ge 1/2 - \alpha/2$, and finally, all the sampled edges are deleted to form $G_{1/2}$.

	Since $n \ge (\alpha/3)^{-k^3}$, there is, with high probability \emph{over the random deletions in the first round}, some super-vertex for which all the edges incident to it in $G$ are deleted in the first round. Therefore, let us condition on the event that all the edges incident to some super-vertex are deleted in the first round; let $r$ be any such super-vertex. Clearly, such an $r$ dies. Let $T_r$ be the connected set of dead super-vertices containing $r$; we claim that $|T_r| = n$ with high probability \emph{over the random deletions in the second round}. Since the two rounds of deletions are independent, this claim clearly implies that the $t$-core of $G_{1/2}$ is empty with high probability.

	The rest of the proof is devoted to the proof of the claim above, namely that for any fixed super-vertex $r$, conditional on $r$ dying after the first round of deletions, the second round of deletions guarantee that $|T_r| = n$ with high probability. In what follows, we fix an arbitrary super-vertex $r$, abbreviate $T_r$ by $T$, and write $\Prob_r$ for the probability over the random deletions in the second round, conditioned on $r$ dying in the first round.

	We now need slightly different arguments based on how large $m = |T|$ might be. Before we turn to this, we observe that
	\begin{enumerate}
		\item if $m < n$, then since $H$ is connected, the vertex boundary $\partial T$ of $T$ in $H$ is both non-empty and necessarily contained in the set of surviving super-vertices, and
		\item for each surviving super-vertex $v\in\partial T$, there is at least one vertex $v^* \in V(G)$ contained in $v$ that survives.
	\end{enumerate}

	First, we handle the case where $1 \le m < 99n/100$ by a union bound over the potential choices of $T$. Our task then is to bound, for all choices of $T_0$ with $|T_0| = m$, the probability $\Prob_r(T=T_0)$.

	Consider any connected set $T_0$ of $m$ super-vertices containing $r$. Due to our choice of $H$, Lemma \ref{lem:AM}, and since  $|T_0| = m < 99n/100$, we know that $|\partial T_0|>c_1 m$ (for some universal $c_1 > 0$). Let $v_1,v_2,\dots,v_\ell$ be a maximal independent set of surviving super-vertices in $H[\partial T_0]$, and note that since each vertex in $H[\partial T_0]$ has degree at most 2, we must have $\ell \ge |\partial T_0|/3 \ge c_1 m/3$. Next, note that if $T=T_0$, then there must exist vertices $v^*_1 \in v_1, v^*_2 \in v_2,\dots, v^*_\ell \in v_\ell$ of $G$ that also survive.

	For any such choice of vertices $v^*_1, v^*_2,\dots, v^*_\ell$, we shall now estimate the probability, over the second round of deletions, that these vertices survive. By virtue of how $G$ is constructed from $H$, it is clear that $v^*_i$ is not adjacent to any of the vertices $v^*_1,v^*_2,\dots,v^*_{i-1}$ for all $1\le i \le \ell$. This allows us to bound
	\begin{align}\label{earlier-survive}
		\Prob_r \left(  v^*_1,\dotsc, v^*_{\ell} \text{ survive}  \mid T_0 \textrm{ dies}  \right)
		 & \le \prod_{i=1}^{\ell} \Prob_r
		\begin{pmatrix}
			\,\text{at least }k/3+\alpha k\text{ edges from }v^*_i\,\, \\
			\text{to }\overline{T_0} \text{ survive the second round}
		\end{pmatrix}
		\notag                                                                                                           \\
		 & \leq \prod_{i=1}^{\ell} \Prob\left(\operatorname{Binom}(2k/3,1/2 + \alpha/2) \ge k/3 + \alpha k\right) \notag \\
		 & \leq (1+c_2)^{-k\ell},
	\end{align}
	where $c_2 > 0$ is a constant depending on $\alpha$ alone.

	Using the fact that $\ell \ge c_1 m/3$, a union bound over all potential choices of $v^*_1,v^*_2,\dots,v^*_{\ell}$ --- of which there are at most $(k/3)^{2m}$ since $|\partial T_0| \le 2|T_0| = 2m$ --- yields the estimate
	\[
		\Prob_r\left(T=T_0 \right) \leq (k/3)^{2m} (1+c_2)^{- c_1 k m / 3}.
	\]
	Finally, the number of connected sets $T_0$ of size $m$ that contain the fixed root $r$ is, by Lemma \ref{lem:treelemma}, at most $(3e)^m$. Thus, it follows again from the union bound that
	\beq{smallT}
	\Prob_r\left( 1 \le |T| < 99 n/100 \right) \le \sum_{m=1}^{99 n/100} \left(e k^2 / 3\right)^m \left(1+c_2\right)^{- c_1 k m / 3} = o(1),
	\enq
	with the last asymptotic estimate holding in the limit of $k \to \infty$.

	Next, we deal with the possibility that $99n/100 \le m < n$. In this case, note that (by the definition of $T$), every super-vertex $v\in \overline{T}$ sends at most one edge to $T$, and hence has at least two neighbors in $\overline{T}$. Let $S$ be a connected component in $H[\overline{T}]$ and put $s = |S|$; since $S$ has minimum degree $2$, it contains a cycle, and since $H$  has girth at least $c' \log n$ (for some universal $c'>0$), this implies that $0.01n \ge s = |S| \ge c'\log n$. Observe that since $S$ is a connected component of $H[\overline{T}]$, it must be the case that $\partial S \subset T$. As $|\nabla S| \ge c'' s$ for an absolute constant $c_3>0$, again due to our choice of $H$ and Lemma \ref{lem:AM}. Since  $|S| = s \le 0.01n$, and since each super-vertex of $S$ has at most one neighbour outside $S$, we conclude that at least $c'' s$ super-vertices in $S$ have a neighbour in $T$, so $|\partial T \cap S| \ge c'' s$. As before, we may find a set $I \subset \partial T \cap S$ of $c'' s /3$ super-vertices that are independent in $H[\partial T]$. As we argued for~\eqref{earlier-survive}, the probability of the super-vertices in $I$ all surviving conditional on $T$ dying is at most
	\[\left((k/3) (1+c_2)^{-k}\right)^{c'' s /3},\]
	where $c_2 > 0$ is, exactly as before, a constant depending on $\alpha$ alone. Then, again invoking Lemma \ref{lem:treelemma}, by a union bound over the choice of a connected $S$ in $H$ of size $s$, and $I \subset S$ (which can be chosen in at most $2^s$ ways), we get
	\beq{largeT}
	\Prob_r\left( 99 n/100 < |T| < n \right) \le \sum_{s=c' \log n}^{n/100} n (3e)^s 2^s \left((k/3) (1+c_2)^{-k}\right)^{c'' s /3} = o(1),
	\enq
	with the last asymptotic estimate holding in the limit of $k \to \infty$.
	The desired claim, namely that $\Prob_r(|T| < n) = o(1)$, follows from~\eqref{smallT} and~\eqref{largeT}, and the proof is complete.
\end{proof}

The bottleneck in the proof of Theorem \ref{t:degen-main} that we just saw comes from the tension between graph expansion and
the impact of having dead neighbours. Specifically, instead of starting
with $3$-regular graphs and looking at the $(k/3+\alpha k)$-core, if we started with $\ell$-regular graphs and looked at the $t$-core,
then having a dead super-neighbour would be a serious mortality risk only if $t>\tfrac{1}{2}(1-1/\ell)k$.
Improving the argument would require using $2$-regular expanders, which clearly do not exist.

To prove Theorem \ref{t:degen-sec} we turn to \emph{directed} expander graphs instead. There do exist directed expander graphs all whose in-degrees
are equal to $2$. A downside to this approach is that the gadgets we now use to form the super-vertices are more complex than mere independent sets.
Consequently, these gadgets contain high-density subgraphs which have a non-neglible chance of surviving in the last
phase of the deletion process when $99n/100\leq m<n$; this explains why a tiny number of vertices survive in Theorem \ref{t:degen-sec}.

We call a directed graph \emph{$d$-regular} if the in-degree and out-degree of each vertex are~$d$. The following lemma follows from a standard probabilistic construction.

\begin{lemma}\label{l:two-reg-expander}
	For all sufficiently large $n$, there exist $2$-regular directed graphs $H$ on $n$ vertices that,	for every non-trivial subset $S \subset V(H)$ of vertices, satisfy
	\[
		\abs{\partial S}\geq c_3 \min(\abs{S},n-\abs{S}),
	\]
	where $c_3>0$ is a universal constant.
\end{lemma}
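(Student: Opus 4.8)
The plan is to take $H$ to be a random $2$-regular digraph and verify the expansion by a union bound. Concretely, let $\pi_1,\pi_2$ be independent uniformly random permutations of $[n]$ and let $H$ be the digraph on $[n]$ with an edge $v\to\pi_1(v)$ and an edge $v\to\pi_2(v)$ for each $v$; then every vertex has in-degree and out-degree $2$. (If a \emph{simple} digraph is wanted, condition additionally on $\pi_1,\pi_2$ being fixed-point-free and on $\pi_1(v)\neq\pi_2(v)$ for all $v$, an event of probability bounded below by a positive absolute constant; this changes nothing below.) The goal is to show that, for a sufficiently small absolute constant $c_3>0$ --- one can take $c_3=1/100$ --- the probability that some non-trivial $S$ fails $\abs{\partial S}\ge c_3\min(\abs S,n-\abs S)$ is less than $1$ for all large $n$; this suffices.

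The key reduction is to rephrase the bad event so that it involves only one of $\pi_1,\pi_2$ at a time. Fix a non-trivial $S$, let $T\in\{S,\overline S\}$ be the side with $\abs T=t:=\min(\abs S,n-\abs S)\le n/2$, and suppose $\abs{\partial S}<c_3t$. Then for each $i\in\{1,2\}$ there is a set $B_i\subseteq T$ with $\abs{B_i}>(1-c_3)t$ and $\pi_i(B_i)\subseteq T$: when $T=S$ this is because $\pi_i(S)\setminus S\subseteq\partial S$, so that $\abs{\pi_i(S)\cap S}>(1-c_3)t$ and one may take $B_i=\{u\in S:\pi_i(u)\in S\}$; when $T=\overline S$ one instead observes that $\{v\in\overline S:\text{both in-neighbours of }v\text{ lie in }\overline S\}$ equals $\overline S\cap\pi_1(\overline S)\cap\pi_2(\overline S)$ and has size $>(1-c_3)t$, and takes $B_i=\{u\in\overline S:\pi_i(u)\in\overline S\}$. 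Since the event ``there is a $B\subseteq T$ with $\abs B\ge b:=\lceil(1-c_3)t\rceil$ and $\pi_i(B)\subseteq T$'' depends only on $\pi_i$, the two such events (one per $i$) are independent.

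Now carry out the union bound. For a fixed $B\subseteq T$ with $\abs B=b$, the image $\pi_i(B)$ is a uniformly random $b$-subset of $[n]$, so $\Prob(\pi_i(B)\subseteq T)=\binom{t}{b}/\binom{n}{b}$; summing over the $\binom{t}{b}$ choices of $B$ and then using independence in $i$, $\Prob(\abs{\partial S}<c_3t)\le\big(\binom{t}{b}^2/\binom{n}{b}\big)^2$. Summing this over the at most $2\binom{n}{t}$ sets $S$ with $\min(\abs S,n-\abs S)=t$, the probability that some bad $S$ exists is at most $\sum_{t=1}^{\lfloor n/2\rfloor}2\binom{n}{t}\binom{t}{b}^4/\binom{n}{b}^2$. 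For $t<1/c_3$ we have $b=t$, the summand is $2/\binom{n}{t}$, and these terms sum to $O(1/n)$. For $t\ge1/c_3$ we have $t-b=\lfloor c_3t\rfloor\ge1$; using $\binom{t}{b}=\binom{t}{\lfloor c_3t\rfloor}\le(2e/c_3)^{c_3t}$ together with $\binom{n}{b}\ge(n/b)^b$ and $\binom{n}{t}\le\binom{n}{b}(n/b)^{t-b}$ (so that $\binom{n}{t}/\binom{n}{b}^2\le(b/n)^{2b-t}\le2^{-(1-2c_3)t}$, since $b\ge(1-c_3)t$ and $b\le t\le n/2$), one gets that this summand is at most $\rho^t$ with $\rho=\rho(c_3)=(2e/c_3)^{4c_3}2^{-(1-2c_3)}$. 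Since $\rho\to1/2$ as $c_3\to0$, we have $\rho<1$ for $c_3$ small, so the $t\ge1/c_3$ terms sum to $O(\rho^{1/c_3})$, and the total probability is $O(1/n)+O(\rho^{1/c_3})<1$ for large $n$. (With a little more care --- exploiting that $b/n=o(1)$ for $t=o(n)$ and invoking sharp entropy estimates for $t=\Theta(n)$ --- this total is in fact $o(1)$.)

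The one genuinely delicate point is the binomial bookkeeping in the bulk range $t=\Theta(n)$: there the off-the-shelf bounds $(n/t)^t\le\binom{n}{t}\le(en/t)^t$ are lossy by a factor $e^{\Theta(n)}$, which would swamp the expansion gain, so it is essential to compare $\binom{n}{t}$ directly against $\binom{n}{b}$ (so that most of the binomial mass cancels, as above) rather than estimating each factor on its own. The remaining ingredients --- the small-$t$ tail, the reduction to a simple $2$-regular digraph, and the choice of an explicit admissible $c_3$ --- are routine.
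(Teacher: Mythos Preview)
Your proof is correct and follows the same probabilistic approach the paper sketches: the paper merely asserts that a uniformly random $2$-regular digraph works with high probability and points to the configuration-model literature, whereas you supply a full union-bound computation in the equivalent two-permutation model. Your reduction to events depending on a single $\pi_i$ (so as to exploit independence and square the bound) and the careful comparison of $\binom{n}{t}$ against $\binom{n}{b}$ rather than estimating each separately are exactly the ingredients a complete write-up of the paper's sketch needs; there is nothing substantively different in method.
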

\begin{proof}
	A uniformly random $2$-regular directed graph on $n$ vertices has this property with high probability as $n \to \infty$. A proof of this fact is, at this point, a routine argument using the configuration model; see~\citep{hoory_linial_wigderson_expanders}, for example, for a similar argument in the context of undirected graphs (that extends to the directed case as well).
\end{proof}

\begin{proof}[Proof of Theorem \ref{t:degen-sec}]
	Given any small $0 < \alpha < 1/16$, fix an integer $2/\alpha \le s \le 4/\alpha$, and let $k$ be any large positive integer that is divisible by $2s$. We shall construct, for infinitely many $n \in \NN$, a $k$-regular graph $G$ on $nk(s+3)(1/2-1/2s)$ vertices with the property that, for
	\[t=k/4+2k/s,
	\] the $t$-core of $G_{1/2}$ has density at most $(1-\delta)^{k^2}$ with probability $1-o(1)$ as $k \to \infty$, where $\delta > 0$ is a constant depending on $\alpha$ alone; clearly, this suffices to prove the result.

	Let $H$ be a $2$-regular directed expander on $n \ge (\alpha/6)^{-k^3}$ vertices as promised by Lemma \ref{l:two-reg-expander}. To describe the blow-up process we use to construct $G$ from $H$, we need to be able to distinguish the in-edges at each vertex of $H$; to that end, two-colour the edges of $H$ (with colours red and blue, say) so that the two in-edges at each vertex are coloured differently. We then build $G$ from $H$ according to the procedure illustrated in \eqref{picpic} as follows; it is routine to verify that this construction indeed produces a $k$-regular graph.
	\begin{enumerate}
		\item Replace each vertex $v$ of $H$ by a disjoint union of $s+3$ independent sets of size $k/2-k/2s$ each;
		      denote these independent sets by $I_1(v),\dots,I_{s+3}(v)$.
		\item For each vertex $v$ of $H$, place a complete bipartite graph between the sets $I_j(v)$ and $I_{j+1}(v)$ for each $1 \le j \le s+2$.
		\item For every \emph{red} directed edge $u\to v$ in $H$ and each $2 \le j \le s+2$, place an arbitrary $(k/2s)$-regular bipartite graph between the sets $I_j(u)$ and $I_1(v)$.
		\item For every \emph{blue} directed edge $u\to v$ in $H$ and each $2 \le j \le s+2$, place an arbitrary $(k/2s)$-regular bipartite graph between the sets $I_j(u)$ and $I_{s+3}(v)$.
	\end{enumerate}
	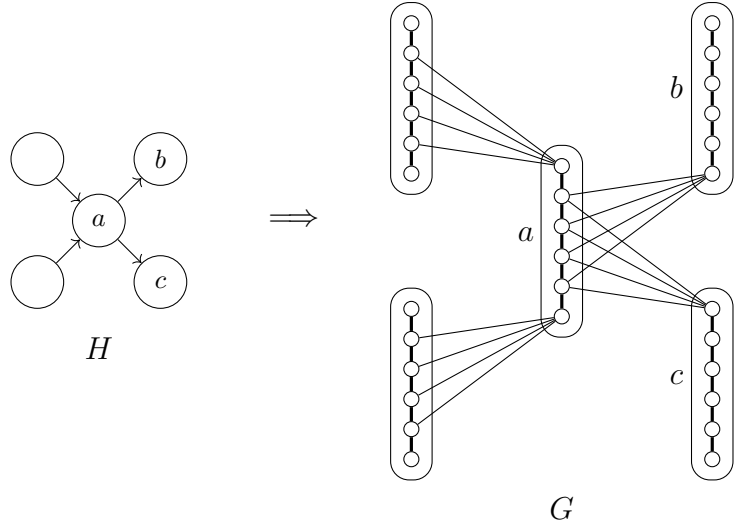
\begin{figure}
		\begin{center}
			\begin{tikzpicture}[baseline=(a.base), before/.style={circle,draw,minimum size=7mm,execute at begin node=\footnotesize}]
				\node[before] (a) {$a$}
				node[before] (b) [above right=8mm] {$b$}
				node[before] (c) [below right=8mm] {$c$}
				node[before] (d) [above left=8mm]  { }
				node[before] (e) [below left=8mm]  { }
				;
				\draw[->] (a) -- (b);
				\draw[->] (a) -- (c);
				\draw[->] (d) -- (a);
				\draw[->] (e) -- (a);
				\draw (0,-1.7) node {$H$};
			\end{tikzpicture}
			\qquad$\implies$\qquad
			\begin{tikzpicture}[baseline=(A3), after/.style={circle,draw,minimum size=2mm,inner sep=0pt}, round/.style={rounded corners=7pt}]
				\def\ht{5}
				\def\yshf{1.9}
				\def\spacing{0.4}
				\foreach \i in {0,1,...,\ht}
					{
						\node[after] (A\i) at ($\spacing*(0,\i)$) {};
						\node[after] (B\i) at ($\spacing*(0,\i)+(2,\yshf)$) {};
						\node[after] (C\i) at ($\spacing*(0,\i)+(2,-\yshf)$) {};
						\node[after] (D\i) at ($\spacing*(0,\i)+(-2,\yshf)$) {};
						\node[after] (E\i) at ($\spacing*(0,\i)+(-2,-\yshf)$) {};
					}
				\foreach \lab/\blob in {a/A,b/B,c/C}
					{
						\node at ($(\blob0.south west)!0.5!(\blob\ht.north west)-(0.4,0)$) {$\lab$};
					}
				\foreach \blob in {A,B,C,D,E}
					{
						\draw[round] ($(\blob0.south west)-(0.2,0.2)$) rectangle ($(\blob\ht.north east)+(0.2,0.2)$);
						\foreach \i [count=\j from 0] in {1,2,...,\ht}
							{
								\draw[very thick] (\blob\i) -- (\blob\j);
							}
					}
				\foreach \i [count=\j from 1] in {2,3,...,\ht}
					{
						\draw (A\j) -- (B0);
						\draw (A\j) -- (C\ht);
						\draw (E\j) -- (A0);
						\draw (D\j) -- (A\ht);
					}
				\draw (0,-2.7) node {$G$};
			\end{tikzpicture}\\[2ex]
		\end{center}
		\caption{From $H$ to $G$: thick edges are complete bipartite graphs of degree $k/2-k/2s$, and thin edges are bipartite graphs of degree $k/2s$.}
		\label{picpic}

	\end{figure}

	To orient the reader, let us say that what follows is an analysis of the following bootstrap percolation-type process on $H$. We form a random subset $R$ of the vertices (namely, those termed `resilient' in the sequel) by placing every vertex $v\in V(H)$ into $R$ independently with probability $\exp\bigl(-\Theta(k^2)\bigr)$. Then, a random initial vertex $r\in V(H)$ is chosen, and at this point, we consider the following propagation process. We start with $V_0=\{r\}$, and at each step, we update $V_0$ by adding to it every out-neighbour $u$  of $V_0$ that satisfies $u \notin R$. The goal is to prove that typically $V_0$ grows to contain all but an exponentially small (in $k^2$) proportion of the vertices of $H$.

	As in the proof of Theorem \ref{t:degen-main}, we delete edges of $G$ in two rounds: we sample the edges in the two rounds independently with probabilities $1/3s \ge \alpha/6$ and $(1/2 - 1/3s) / (1- 1/3s) \ge 1/2 - 1/2s$ respectively, and then delete all the sampled edges. With the same notions of vertices and super-vertices \emph{surviving} and \emph{dying} as in the proof of Theorem \ref{t:degen-main}, we assume that some super-vertex --- we write $r$ for such a super-vertex --- dies in the first round with high probability; this is justified since $n \ge (\alpha/6)^{-k^3}$ is large enough to ensure this. We then write $\Prob_r$ to denote the probability over the random deletions in the second round, conditioned on $r$ dying in the first round.

	Note that the $s+3$ independent sets inside a super-vertex form a path; call a pair of adjacent independent sets $I_j(v)$ and $I_{j+1}(v)$ inside some super-vertex $v$ \emph{resilient} if either the set
	\[
		\{ v^*\in I_j(v) : \text{at least }k/4\text{ edges from }v^*\text{ to }I_{j+1}(v)\text{ survive the second round}\}
	\]
	or the set
	\[
		\{ v^*\in I_{j+1}(v) : \text{at least }k/4\text{ edges from }v^*\text{ to }I_{j}(v)\text{ survive the second round}\}
	\]
	has size at least $k/s$.

	Call a super-vertex $v$ \emph{nearly dead} if each of the sets $I_j(v)$ for $2 \le j \le s+2$ contains fewer than $k/s$ surviving vertices. Note that each dead super-vertex is also nearly dead, and that if $v$ is nearly dead, then the vertices in the sets $I_3(v), I_4(v), \dots, I_{s+1}(v)$ all die since each vertex therein is adjacent to fewer than
	\[ k/s + k/s + k / 2s + k/2s < k/4\]
	surviving vertices (with room to spare).

	Writing $T$ for the set of all nearly dead super-vertices that can be reached along a directed path starting at $r$ in $H$, we observe the following.
	\begin{claim}\label{respair}
		Each $v\in \partial T$ contains a resilient pair.
	\end{claim}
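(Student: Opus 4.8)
The plan is to locate a resilient pair inside $v$ by tracking where the surviving vertices sit along the internal path $I_1(v)-I_2(v)-\cdots-I_{s+3}(v)$, and to fall back on the neighbouring super-vertex $u\in T$ as auxiliary input when the interior of this path is uninformative. The starting point is that $v\notin T$ forces $v$ not to be nearly dead: indeed $v$ is an out-neighbour of $u\in T$ and hence reachable from $r$, so were $v$ nearly dead it would itself belong to $T$. Thus the set $J=\{\, j : 2\le j\le s+2 \text{ and } I_j(v)\text{ contains at least }k/s\text{ surviving vertices}\,\}$ is nonempty.

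The main tool is the following observation: if $I_a(v)$ contains at least $k/s$ surviving vertices for some $2\le a\le s+2$ and $I_{a-1}(v)$ contains fewer than $k/s$ surviving vertices, then $(I_a(v),I_{a+1}(v))$ is a resilient pair (and symmetrically, if instead $I_{a+1}(v)$ contains fewer than $k/s$ survivors, then $(I_{a-1}(v),I_a(v))$ is resilient). To see this, note from the construction that every vertex of $I_a(v)$ has external degree exactly $k/s$; so if $v^*\in I_a(v)$ survives, then at least $t-k/s=k/4+k/s$ of its $t$-core neighbours lie in $I_{a-1}(v)\cup I_{a+1}(v)$, and since fewer than $k/s$ survivors live in $I_{a-1}(v)$, at least $k/4$ of these neighbours — joined to $v^*$ by edges that in particular survive the second round — lie in $I_{a+1}(v)$. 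Applying this to all $\ge k/s$ survivors of $I_a(v)$ yields the claim.

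Armed with this, I would split into cases according to $J$. If $\min J\ge 3$, apply the observation with $a=\min J$: the index $a-1$ lies in $\{2,\dots,s+1\}$ but not in $J$, so $I_{a-1}(v)$ has fewer than $k/s$ survivors and $(I_a(v),I_{a+1}(v))$ is resilient; the case $\max J\le s+1$ is symmetric. The only remaining case is $2\in J$ and $s+2\in J$ simultaneously, and this is where the argument genuinely needs the neighbour $u$, because the observation stalls at both ends of the path ($I_1(v)$ and $I_{s+3}(v)$ are not controlled by near-deadness). Say the edge $u\to v$ is blue, so that all external edges out of $I_{s+3}(v)$ enter $u$ (the red case is identical after reversing the path). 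Since $u$ is nearly dead, the sets $I_3(u),\dots,I_{s+1}(u)$ contain no survivors while $I_2(u)$ and $I_{s+2}(u)$ contain fewer than $k/s$ each, so the neighbourhood of $I_{s+3}(v)$ inside $u$, namely $\bigcup_{j=2}^{s+2}I_j(u)$, holds fewer than $2k/s$ survivors in total. Hence any surviving $v^*\in I_{s+3}(v)$ has fewer than $2k/s$ external $t$-core neighbours, so at least $t-2k/s=k/4$ of its $t$-core neighbours lie in $I_{s+2}(v)$. Now if $I_{s+3}(v)$ has at least $k/s$ survivors this already exhibits $(I_{s+2}(v),I_{s+3}(v))$ as resilient; and if $I_{s+3}(v)$ has fewer than $k/s$ survivors, then since $s+2\in J$ the observation (with $a=s+2$, noting $I_{a+1}(v)=I_{s+3}(v)$ has few survivors) makes $(I_{s+1}(v),I_{s+2}(v))$ resilient. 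In every case $v$ contains a resilient pair.

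I expect the case $2,s+2\in J$ to be the crux: the naive move of splitting a survivor's $\ge k/4+k/s$ interior neighbours evenly between its two adjacent layers loses a factor of two and is fatal, so one is forced to import the structural fact that the super-vertex $u$ feeding the relevant endpoint of $v$ is nearly dead and therefore contributes essentially nothing there. Everything else is bookkeeping with the degree counts $k/s$ and $k/2\pm k/2s$ afforded by the blow-up construction.
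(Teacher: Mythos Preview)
Your proof is correct and rests on the same underlying mechanism as the paper's: locate a layer $I_\ell(v)$ with at least $k/s$ survivors whose adjacent layer has few, and argue that each survivor there must send at least $k/4$ surviving edges into the other adjacent layer, using the nearly-dead in-neighbour $u\in T$ to control the endpoint of the path. The paper's organization is more economical, however. It fixes without loss of generality that the edge $u\to v$ is red and takes $\ell$ to be the minimum over \emph{all} indices $1\le\ell\le s+3$ for which $I_\ell(v)$ has at least $k/s$ survivors; it then shows $(I_\ell,I_{\ell+1})$ is resilient in just two cases: $\ell=1$ (where all neighbours of $I_1(v)$ outside $I_2(v)$ lie in $u$, so $u$ being nearly dead is used directly) and $2\le\ell\le s+2$ (where minimality of $\ell$ bounds the survivors in $I_{\ell-1}(v)$). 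By restricting $J$ to $\{2,\dots,s+2\}$ and treating the two ends symmetrically, you are forced into a three-way split and a further subdivision on whether $I_{s+3}(v)$ has $\ge k/s$ survivors; the paper avoids this by committing up front to the end of the path that matches the colour of the in-edge from $u$.
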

	\begin{proof}
		Let $u\in T$ be a nearly dead in-neighbour of $v$, and consider the directed edge from $u$ to $v$; without loss of generality, suppose that this edge is coloured red. Let $1 \le \ell \le s+3$ be the smallest index for which $I_\ell(v)$ contains at least $k/s$ surviving vertices; by the definition of $v$ not being nearly dead, such an $\ell$ exists and satisfies $\ell \le s+2$.

		Suppose for the sake of contradiction that the sets $I_{\ell}(v)$ and $I_{\ell+1}(v)$ do not form a resilient pair. Then $I_{\ell}(v)$ contains a surviving vertex $v^*$ incident to fewer than $k/4$ edges into $I_{\ell+1}(v)$ that survive the second round of deletions; such a vertex $v^*$ then has to be incident to more than $t-k/4=2k/s$ surviving vertices outside $I_{\ell+1}(v)$. We cannot have $\ell=1$ because the sets $I_3(u), I_4(v), \dots, I_{s+1}(u)$ are all dead, and
		the sets $I_2(u)$ and $I_{s+2}(u)$ contain fewer than $k/s$ surviving vertices each, leaving $v^*$ with fewer than $k/s + k/s = 2k/s$ surviving neighbours outside $I_{\ell+1}(v)$. We cannot have $2\leq \ell\leq s+2$ either because such a $v^*$ is adjacent to at most $k/2s$ vertices inside each of the two out-neighbours of $v$, and fewer than $k/s$ vertices in $I_{\ell-1}(v)$ since this set contains fewer than $k/s$ surviving vertices (by the minimality of $\ell$), leaving $v^*$ with fewer than $k/2s + k/2s + k/s = 2k/s$ surviving neighbours outside $I_{\ell+1}(v)$. Hence the sets $I_{\ell}(v)$ and $I_{\ell+1}(v)$ form a resilient pair, as desired.
	\end{proof}

	Call a super-vertex resilient if it contains a resilient pair, and let $R$ be the set of resilient super-vertices. Since resilience of a super-vertex depends only on the edges of $G$ inside the super-vertex, events of the form $\{v\in R\}$
	are mutually independent for different super-vertices $v$. It is also clear that for each super-vertex $v$, we have
	\begin{align*}
		\Prob_r(v\in R) & \leq (s+2)\binom{k/2-k/2s}{k/s}\Prob(\operatorname{Binom}(k/2-k/2s,1/2+1/2s)\ge k/4 )^{k/s} \\
		                & \leq (1+c_4)^{-k^2}
	\end{align*}
	for some constant $c_4>0$ that depends on $s$ (and thus $\alpha$) alone.

	This gives us a way to estimate the size of $T$: if $\abs{T_0}\leq 99n/100$, then
	\[
		\Prob_r(T=T_0)\leq \left((1+c_4)^{-k^2}\right)^{\abs{\partial T_0}}\leq (1+c_4)^{-c_3 \abs{T_0} k^2 / 100},
	\]
	from which it follows (as in the proof of Theorem \ref{t:degen-main}) that
	\[
		\Prob_r(\abs{T}\leq 99n/100)\leq \sum_{m=1}^{99n/100} (3e)^m (1+c_4)^{-c_3 m k^2 / 100}=o(1)
	\]
	as $k\to\infty$.

	To see that the size of $T$ must be very nearly $n$ (and not just at least $99n/100$), we analyse its complement.  By the definition of $R$ and \eqref{respair}, we have $\partial T\subseteq R$. By Markov's inequality, $|R|<  n(1+c_4/2)^{-k^2}$ with high probability as $k \to \infty$. By our choice of $H$ satisfying Lemma \ref{l:two-reg-expander}, for every $T$ with $n/2\le |T|\le n - n (1+c_4/2)^{-k^2}/c_3$, we have $|\partial T|\ge n(1+c_4/2)^{-k^2}$. This tells us that
	\[\abs{\overline{T}} / n \le (1+c_4/2)^{-k^2}) / c_3 = (1-\beta)^{k^2},\]
	with high probability as $k \to \infty$, where $\beta>0$ is again a constant depending on $\alpha$ alone; this completes the proof.
\end{proof}

\section{Conclusion}\label{s:conc}
A large number of interesting open problems remain; below, we highlight a few that we find particularly appealing.

First, in the context of the chromatic number of problem, all the lower bounds on $\chi(G_{1/r})$ in terms of $\chi(G)$ that we currently have rely crucially on $r$ being an integer. It would be very interesting to prove any lower bound for $\EE[\chi(G_{0.499})]$ that is (asymptotically) better than $\chi(G)^{1/3}$.

Second, in the context of the colouring number of problem, we have been unable to prove any interesting lower bounds for $\DD(k)$. We would not be surprised if the truth is that $\DD(k) = k/3+o(k)$ as $k \to \infty$, but even showing that $\DD(k) \ge k/3.99 +o(k)$ appears to be a challenging problem, as evidenced by Theorem \ref{t:degen-sec}. In fact, we (somewhat embarrassingly) do not know if there exists a function $f(k) \to \infty$ as $k\to\infty$ such that $\DD(k) \ge k/4+f(k)$.

\section*{Acknowledgements}
The first author was supported by NSF grants DMS-2154063 and DMS-1555149, and the third author was supported by NSF grants DMS-2237138 and CCF-1814409, as well as a Sloan Research Fellowship. Additionally, we also wish to thank Yury Person and Benny Sudakov for valuable discussions.

\bibliographystyle{amsplain}
\bibliography{col_rand_subgraphs}

\end{document}